\theoremstyle{theorem}
\newtheorem{theorem}{Theorem}
\newtheorem{lemma}[theorem]{Lemma}
\theoremstyle{definition}
\DeclareMathOperator{\ck}{crank}
\title[Extending a Refinement of the Crank--Mex Theorem]{Extending Andrews and Newman's Refinement of the Crank--Mex Theorem}
\author{George Andrews and Brian Hopkins}
\address{}
\email{}
\date{}
\begin{document}

\begin{abstract}
The crank--mex theorem states that the number of integer partitions of $n$ with nonnegative crank equals the number with odd minimal excludant (mex).  Andrews and M.\ Newman recently refined that result in terms of the number of parts greater than one.  Here, we establish and expand a complementary result connecting the partitions with even mex, having fixed points, with negative crank, and with positive crank, all refined in terms of number of parts greater than one.  We provide both analytic and combinatorial proofs.
\end{abstract}
\keywords{Integer partitions; crank; mex; fixed points; generating functions; combinatorial proofs}
\subjclass{11P82, 05A17}

\maketitle

\section{Introduction}

A partition of a positive integer $n$ is $\lambda = (\lambda_1, \ldots, \lambda_t)$ where each part $\lambda_i$ is a positive integer, the sum of the parts is $n$, and the parts are ordered such that $\lambda_1 \ge \ldots \ge \lambda_t$.  Write $P(n)$ for the set of partition of $n$ and $p(n) = |P(n)|$; we will consistently use lowercase letters for the sizes of sets.  By convention, $p(0) = 1$.

An important partition statistic for our project is the Durfee square: Given a partition $\lambda$, the size of its Durfee square is the greatest index $i$ for which $\lambda_i \ge i$.  See Andrews and Eriksson \cite{ae} for more background on partitions.

We will also use $q$-series notation such as 
\begin{gather*}
(a;q)_n = \prod_{i = 0}^{n-1} (1-aq^i) \quad \text{for $0 \le n \le \infty$},\\
{a \brack b} = \frac{(q;q)_a}{(q;q)_b (q;q)_{a-b}} \quad \text{if $0 \le b \le a$, else $0$}, \\
{}_2\phi_1\binom{a, b; q, z}{c} = \sum_{n = 0}^\infty \frac{(a;q)_n (b;q)_n z^n}{(q;q)_n (c;q)_n};
\end{gather*}
see Andrews \cite{a}.

Our results connect several partition statistics including, most importantly, the crank.  This was named by Dyson in 1944 in order to witness the modulo 11 Ramanujan congruence but not found until 1988 by Andrews and Garvan \cite{ag}.  For a partition $\lambda$, let 
$\omega(\lambda)$ be the number of parts 1 and $\mu(\lambda)$ the number of parts greater than $\omega(\lambda)$. The crank of $\lambda$ is then
\[ \ck(\lambda) =\begin{cases} \lambda_1 & \text{ if $\omega(\lambda)=0$},\\
\mu(\lambda) - \omega(\lambda) & \text{ if $\omega(\lambda)>0$}.
\end{cases} \]
We consider partitions with several ranges of cranks, such as $M_{\ge 0}(n)$ for nonnegative crank.

The mex of a partition $\lambda$ is the smallest positive integer that is not a part; see Andrews--D. Newman 2019 \cite{an19}.  Let $X_o(n)$ be the partitions of $n$ with odd mex, $X_e(n)$ for even mex.

In 2020, both Andrews--D.\ Neman \cite{an20} and Hopkins--Sellers \cite{hs20} established
\begin{equation}
x_o(n) = m_{\ge 0}(n) \label{cm}
\end{equation}
which has come to be known as the crank--mex theorem.  In 2023, Konan \cite{k} gave a combinatorial proof of this result.

For the refinements, we introduce another partition statistic:\ Let $\beta(\lambda)$ be the number of parts of $\lambda$ that are greater than one.  The length of $\lambda =  (\lambda_1, \ldots, \lambda_t)$ is $t$, the number of parts, so that $\beta(\lambda) = t - \omega(\lambda)$.

We further refine each set of partitions with the parameter $k$ indicating the number of parts greater than one.  For instance, $M_{<0}(n,k)$ is the set of partitions $\lambda$ of $n$ with $\ck(\lambda) < 0$ and $\beta(\lambda) = k$.

Andrews and M.\ Newman recently established a refinement of the crank--mex theorem \eqref{cm}, proving analytically that
\[x_o(n,k) = m_{\ge 0}(n,k)\]
and requesting a combinatorial proof.

The current work incorporates another partition statistic.  In 2022, Blecher and Knopfmacher \cite{bk} applied the notion of fixed points to partitions:\ A partition $\lambda = (\lambda_1, \ldots, \lambda_t)$ has a fixed point if $\lambda_i = i$ for some index $i$.  Since parts are listed in nonincreasing order, a partition can have at most one fixed point.  Let $F(n)$ be the partitions of $n$ with a fixed point.  Note that a partition with a fixed point $i$ has Durfee square size $i$.  A partition with Durfee square size $d$ without a fixed point has $\lambda_d > d$.

In 2024, Hopkins and Sellers \cite{hs24} connected many of the ideas mentioned so far by proving that
\[ x_e(n) = f(n) = m_{< 0}(n) = m_{> 0}(n).\]

Our main result is the following refinement of this result.

\begin{theorem} \label{big}
Given positive integers $n$ and $k$, the following are equinumerous:
\begin{itemize}
\item[(a)] partitions of $n$ with even mex and $k$ parts greater than 1,
\item[(b)] partitions of $n$ with a fixed point and $k+1$ parts greater than 1,
\item[(c)] partitions of $n$ with negative crank and $k$ parts greater than 1,
\item[(d)] partitions of $n$ with positive crank and $k+1$ parts greater than 1.
\end{itemize}
That is, $x_e(n,k) = f(n,k+1) = m_{<0}(n,k) = m_{>0}(n,k+1)$.
\end{theorem}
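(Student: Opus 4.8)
\section*{Proof proposal}

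The plan is to prove the chain of equalities in stages. The first link, (a)$=$(c), is essentially immediate: every partition of $n$ with $k$ parts greater than $1$ has a mex that is odd or even and a crank that is nonnegative or negative, so
\[
x_o(n,k)+x_e(n,k)=p(n,k)=m_{\ge0}(n,k)+m_{<0}(n,k),
\]
where $p(n,k)$ counts all partitions of $n$ with $k$ parts greater than $1$; combined with Andrews and M.\ Newman's refinement $x_o(n,k)=m_{\ge0}(n,k)$ this gives $x_e(n,k)=m_{<0}(n,k)$. (A self-contained version would refine Konan's bijective proof of \eqref{cm}, noting that $\beta(\lambda)$ is the second part of the conjugate of $\lambda$.) It then remains to attach $f(n,k+1)$ and $m_{>0}(n,k+1)$, and I would do this both analytically and bijectively.

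For the analytic proof I would compute closed forms for $\sum_n f(n,k+1)q^n$, $\sum_n m_{<0}(n,k)q^n$, and $\sum_n m_{>0}(n,k+1)q^n$ (the first already matching $\sum_n x_e(n,k)q^n$ via the reduction above) and show they agree. Splitting a fixed-point partition into its $d\times d$ Durfee square (with $d$ the fixed point, so the square accounts for $d$ of the parts greater than $1$), a partition with at most $d-1$ parts to the right of it, and a partition with parts at most $d$ below it, one finds
\[
\sum_{n\ge1}f(n,k+1)\,q^n=\frac{q^{2k+1}}{1-q}\sum_{j=1}^{k}\frac{q^{j^2}}{(q;q)_j}\,{k-1\brack j-1}.
\]
Splitting the crank statistics instead on $\omega(\lambda)$ and on the number of parts exceeding $\omega(\lambda)$ (using ${w-2+a\brack a}$, with a size shift, for the $a$ parts lying in $[2,w]$ and $q^{(w+1)b}/(q;q)_b$ for the $b$ parts exceeding $w$) produces double sums for $m_{<0}(n,k)$ and $m_{>0}(n,k+1)$; and $x_e(n,k)$ comes from the usual ``staircase plus tail'' decomposition of partitions with a prescribed mex. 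Matching these closed forms is exactly where the ${}_2\phi_1$ transformations and $q$-Chu--Vandermonde summation recorded in the preamble should enter.

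For the combinatorial proof I would give the bijection behind (b)$=$(c) directly. From a fixed-point partition $\lambda$ with $\beta(\lambda)=k+1\ge2$, let $d$ be its Durfee square size (so $\lambda_d=d\ge2$), delete the part $\lambda_d$, and adjoin $d$ parts equal to $1$; the result has $\beta=k$, and its crank is negative because the new number of $1$'s is $\omega(\lambda)+d$ while at most $d-1$ parts can exceed it---only $\lambda_1,\dots,\lambda_{d-1}$ can, since the part $d$ was removed and everything below the Durfee square is at most $d$. For the inverse, given a partition $\nu$ with negative crank and $\beta(\nu)=k$, let $d$ be the largest integer for which $\nu$ has at least $d-1$ parts $\ge d$ (equivalently $\nu_{d-1}\ge d$); the negative-crank hypothesis forces $d\le\omega(\nu)$, so one may delete $d$ parts equal to $1$ and insert a single part equal to $d$, obtaining a fixed-point partition with Durfee square size $d$ and $\beta=k+1$, and the two maps are mutually inverse. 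For (c)$=$(d) I would recognize it as a $\beta$-refined crank symmetry $M(m,n)=M(-m,n)$: adapt the standard map realizing that symmetry (on partitions with no $1$'s it replaces the largest part $m$ by $m$ ones, dropping $\beta$ by exactly one and negating the crank) and check it is compatible with $\beta$ on the remaining crank classes; alternatively, verify that the Hopkins and Sellers bijection underlying $f(n)=m_{>0}(n)$ preserves $\beta$, as the shift-free statement $f(n,k+1)=m_{>0}(n,k+1)$ suggests.

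The main obstacle I anticipate is twofold: on the combinatorial side, extending the refined crank-symmetry bijection past the easy $\omega=0$ case in a way that uniformly drops $\beta$ by one; and on the analytic side, the $q$-series manipulation identifying the Durfee-square sum for $f(n,k+1)$ with the crank double sums. The choice of the Durfee parameter $d$ that makes the (b)$\leftrightarrow$(c) map reversible is also delicate, but, as sketched above, it is forced by the negative-crank condition and so is manageable.
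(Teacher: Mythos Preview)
Your shortcut for (a)$=$(c) via complementation from the Andrews--M.\ Newman refinement $x_o(n,k)=m_{\ge 0}(n,k)$ is valid and cleaner than what the paper does: the paper computes all four bivariate generating functions from scratch (showing (a) and (c) each give a series $E(z,q)$ and (b), (d) each give $zE(z,q)$), never invoking the earlier result. Your (b)$\leftrightarrow$(c) bijection is essentially the paper's own map in \S3.2; your single inverse rule ``largest $d$ with $\nu_{d-1}\ge d$'' neatly unifies what the paper states as two cases (fixed point present or not), and the inequality $d\le\omega(\nu)$ you need is exactly the content of the paper's Lemmas~\ref{ncf} and~\ref{ncg}.

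The genuine gap is (c)$=$(d). Your primary plan---adapt the standard crank-negating involution so that it shifts $\beta$ by one---cannot work, and the paper proves it. For $n=7$ the partitions with crank $-1$ are $(5,1,1)$ and $(3,2,1,1)$, with $\beta$-values $1$ and $2$, while those with crank $+1$ are $(4,2,1)$ and $(3,3,1)$, both with $\beta=2$; so no bijection can simultaneously negate the crank and increase $\beta$ by one. The paper's map in \S\ref{negposcrank} is therefore \emph{not} a refinement of Berkovich--Garvan: given $\lambda\in M_{<0}(n,k)$ with $\omega(\lambda)=w$ and $\mu(\lambda)=m$, it subtracts $1$ from each of $\lambda_1,\dots,\lambda_m$, inserts a new part $w$, and leaves $m$ ones, producing $\rho\in M_{>0}(n,k+1)$ whose crank need not equal $-\ck(\lambda)$ (e.g.\ $(5,1,1,1)\mapsto(4,3,1)$, cranks $-2$ and $+1$). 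Your fallback---checking that the Hopkins--Sellers map for $f(n)=m_{>0}(n)$ preserves $\beta$---may or may not succeed, but it is a different route from the paper's, and you would still need to supply the argument. So the obstacle you flagged (``extending the refined crank-symmetry bijection past the easy $\omega=0$ case'') is not merely delicate; it is an actual impossibility, and a new idea is required.
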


Note that $X_e(n,0)$, $F(n,0)$, and $M_{<0}(n,0)$ each consist of the single partition $(1^n)$ which corresponds to $M_{>0}(n,1) = (n)$ so that $x_e(n,0) = f(n,0) = m_{<0}(n,0) = m_{>0}(n,1)$.  In other words, the equality for (a), (c), and (d) extends to the case $k = 0$.  The minor exception around $F(n,0)$ will arise in both proofs.  We use the notation $F^*(n,k+1)$ to include this special consideration for the $k=0$ case.

In the next section, we give an analytic proof of Theorem \ref{big}.  Section 3 presents three bijections (one from Konan) that provide a combinatorial proof of the theorem.  We conclude with comments on Andrews and Newman's request for a combinatorial proof of their result.

\section{Generating function proof}

We determine generating functions for each type of partition where $q$, as usual, contributes to the partition sum and $z$ tracks the parts greater than one.  We will also use the following crank generating function where $y$ keeps track of the crank:
\begin{equation}
(1-q) + \sum_{n \ge 1} \frac{q^n y^n}{(q^2;q)_{n-1}} + \sum_{n \ge 1} \frac{q^n y^{-n}}{(q^2;q)_{n-1}} \sum_{m \ge 0} \frac{q^{m(n+1)} y^m}{(q;q)_m}. \label{cgf}
\end{equation}
This follows from the crank discovery in 1988 \cite{ag,g}; see \cite[\S2]{an25} for details.

(a) The generating function for partitions with even mex is
\[ \sum_{n \ge 1}q^{1+\cdots+(2n-1)} \prod_{\substack{j = 1 \\ j \ne 2n}}^\infty \frac{1}{1-q^j} \]
as the exponent $1 + \cdots + (2n-1)$ guarantees that at least one of each part $1, \ldots, 2n-1$ occurs and the product allows for any additional part other than $2n$ \cite{an20,hs20}.

To track the number of parts greater than one, we introduce $z$:
\[ \sum_{n \ge 1} \sum_{z \ge 0} x_e(n,k) q^n z^k = \sum_{n \ge 1}z^{2n-2} q^{1+\cdots+(2n-1)} \frac{1}{1-q} \prod_{\substack{j = 2 \\ j \ne 2n}}^\infty \frac{1}{1-zq^j} \]
where $z^{2n-2}$ accounts for the required parts $2, \ldots, 2n-1$ and the product has been split with $zq^j$ only in the geometric series for $j \ge 2$.

Continuing,
\begin{align*}
\sum_{n \ge 1} \sum_{z \ge 0} x_e(n,k) q^n z^k & = \sum_{n \ge 1}z^{2n-2} q^{1+\cdots+(2n-1)} \frac{1}{1-q} \prod_{\substack{j = 2 \\ j \ne 2n}}^\infty \frac{1}{1-zq^j} \\
& = \frac{1}{(1-q)(zq^2;q)_\infty} \sum_{n \ge 1} z^{2n-2} q^{\binom{2n}{2}} (1-z q^{2n}) \\
& = \frac{1}{(1-q)(zq^2;q)_\infty} \sum_{n \ge 1} \left(z^{2n-2} q^{\binom{2n}{2}} - z^{2n-1} q^{\binom{2n+1}{2}} \right) \\
& = \frac{1}{(1-q)(zq^2;q)_\infty} \sum_{n \ge 0} (-1)^n z^{n-2} q^{\binom{n}{2}} \\
& = \frac{1}{(1-q)(zq^2;q)_\infty} \sum_{n \ge 2} (-1)^n z^{n-2} q^{\binom{n}{2}} \\
& = \frac{1}{(1-q)(zq^2;q)_\infty} \sum_{n \ge 1} (-1)^{n-1} z^{n-1} q^{\binom{n+1}{2}}
\end{align*}
which we call $E(z,q)$.

(b) The generating function for partitions with a fixed point is
\[ \sum_{n \ge 1} \frac{q^{n^2}}{(q;q)_{n-1} (q;q)_n}\]
where $n^2$ accounts for the Durfee square, $(q;q)_n$ for the partition $\beta$ below the Durfee square with first part at most $n$, and $(q;q)_{n-1}$ for the partition $\alpha'$ to the right of the Durfee square with first part at most $n-1$ to insure that the $n$th part of the overall partition is $n$, i.e., a fixed point \cite{hs24}.

To track the number of parts greater than one, we introduce $z$:
\[ \sum_{n \ge 1} \sum_{z \ge 0} f(n,k) q^n z^k = \sum_{n \ge 1} \frac{z^n q^{n^2}}{(q;q)_{n-1} (1-q)(zq^2;q)_{n-1}} \]
where $z^n$ accounts for the first $n$ parts (the Durfee square and $\alpha'$ to its right) and $(q;q)_n$ has been factored with $z$ applied to the parts greater than one in $\beta$.  Note that this interpretation is valid only for partitions with Durfee square of size at least two; this excludes the partitions with all parts 1 mentioned after the Theorem \ref{big} statement.

Continuing,
\begin{align*}
\sum_{n \ge 1} \sum_{z \ge 0} f(n,k) q^n z^k & = \sum_{n \ge 1} \frac{z^n q^{n^2}}{(q;q)_{n-1} (1-q)(zq^2;q)_{n-1}}  \\
& = \frac{zq}{1-q} \sum_{n \ge 0} \frac{z^n q^{n^2+2n}}{(q;q)_n (1-q)(zq^2;q)_n}  \\
& = \frac{zq}{1-q} \lim_{\tau \to 0} {}_2\phi_1\binom{q/\tau, q/\tau; q, zq\tau^2}{zq^2} \\
& = \frac{zq}{1-q} \cdot \frac{1}{(zq^2;q)_\infty} \lim_{\tau \to 0} {}_2\phi_1\binom{q, q/\tau; q, zq\tau}{zq^2\tau} \\
& \qquad \text{[by the second Heine transform]} \\
& = \frac{zq}{(1-q)(zq^2;q)_\infty} \sum_{n \ge 0} (-z)^n q^{\binom{n+1}{2}+n} \\
& = \frac{z}{(1-q)(zq^2;q)_\infty} \sum_{n \ge 0} (-z)^n q^{\binom{n+2}{2}} \\
& = \frac{z}{(1-q)(zq^2;q)_\infty} \sum_{n \ge 1} (-1)^{n-1} z^{n-1} q^{\binom{n+1}{2}} \\
& = z E(z,q).
\end{align*}

(c) For negative crank, we exclude all terms in the crank generating function \eqref{cgf} where the exponent on $y$ is nonnegative, leaving
\[ \sum_{n \ge 0} \frac{q^n y^{-n}}{(q^2;q)_{n-1}} \sum_{m =0 }^{n-1} \frac{q^{m(n+1)} y^m}{(q;q)_m}. \]
Next, we set $y=1$ and introduce $z$ to track the parts greater than one:
\begin{align*}
 \sum_{n \ge 1} \sum_{z \ge 0} m_{<0}(n,k) q^n z^k & = \sum_{n = 0}^\infty \frac{q^n}{(zq^2;q)_{n-1}} \sum_{m =0 }^{n-1} \frac{z^m q^{m(n+1)}}{(q;q)_m} \\
& = \sum_{m = 0}^\infty \sum_{n = m+1}^\infty \frac{z^m q^{mn+m+n}}{(zq^2;q)_{n-1}(q;q)_m} \\
& = \sum_{m = 0}^\infty \sum_{n = 0}^\infty \frac{z^m q^{mn+m^2+3m+n+1}}{(zq^2;q)_{n-1}(q;q)_m} \\
& = \sum_{n \ge 0} \frac{q^{n+1}}{(zq^2;q)_{n}} \lim_{\tau \to 0} {}_2\phi_1\binom{q/\tau, q/\tau; q, zq^{n+2}\tau^2}{zq^{n+2}} \\
& = \sum_{n \ge 0} \frac{q^{n+1}}{(zq^2;q)_{n}} \cdot \frac{1}{(zq^{n+2};q)_\infty} \sum_{m \ge 0} \frac{(q^2;q)_m (q/\tau;q)_m (zq^{n+1}\tau)^m}{(q;q)_m (z q^{n+3}\tau;q)_m} \\
& \qquad \text{[by the second Heine transform]} \\
& =  \sum_{n \ge 0} \frac{q^{n+1}}{(1-q)(zq^2;q)_\infty} \sum_{m \ge 0} (1-q^{m+1}) (-1)^m q^{\binom{m+1}{2} + m + mn} z^m \\
& =  \sum_{m \ge 0} \frac{(1-q^{m+1})(-z)^m q^{\binom{m+1}{2}}}{(1-q)(zq^2;q)_\infty} \sum_{n \ge 0} q^{(n+1)(m+1)} \\
& =  \frac{1}{(1-q)(zq^2;q)_\infty} \sum_{m \ge 0}  (-z)^m q^{\binom{m+2}{2}} \\
& =  \frac{1}{(1-q)(zq^2;q)_\infty} \sum_{m \ge 1}  (-1)^{m-1} z^{m-1} q^{\binom{m+1}{2}} \\
& = E(z,q).
 \end{align*}
 
 (d) For positive crank, we exclude all terms in the crank generating function \eqref{cgf} where the exponent on $y$ is nonpositive, leaving
 \[ \sum_{n \ge 1} \frac{q^n y^n}{(q^2;q)_{n-1}} + \sum_{n \ge 0} \frac{q^n y^{-n}}{(q^2;q)_{n-1}} \sum_{m \ge n+1} \frac{q^{m(n+1)} y^m}{(q;q)_m} \]
 Next, we set $y=1$ and introduce $z$ to track the parts greater than one.  We want to show
\begin{align}
\sum_{n \ge 1} \sum_{z \ge 0} m_{>0}(n,k) q^n z^k & = zq + \sum_{n \ge 2} \frac{z q^n}{(zq^2;q)_{n-1}} + \sum_{n = 1}^\infty \frac{q^n}{(zq^2;q)_{n-1}} \sum_{m = n+1}^\infty \frac{z^m q^{m(n+1)}}{(q;q)_m} \notag \\
& = \frac{z}{(1-q)(zq^2;q)_\infty} \sum_{n \ge 1} (-1)^{n-1} z^{n-1} q^{\binom{n+1}{2}} \notag \\
& = zE(z,q). \label{dgoal}
\end{align}
Toward that end, 
\begin{align*}
zq + \sum_{n \ge 2} \frac{z q^n}{(zq^2;q)_{n-1}} & - (1-zq) \sum_{m \ge 1} \frac{z^m q^m}{(q;q)_m} \\
& = zq  +(1-zq)  \sum_{n \ge 2}  \frac{z q^n}{(zq;q)_{n-1}} - (1-zq) \left(\frac{1}{(zq;q)_m} - 1 \right) \\
& = zq + (1-zq) \left( \frac{1}{(zq;q)_\infty} - 1 - \frac{zq}{1-zq} \right) - \frac{1-zq}{(zq;q)_\infty} +1 - zq \\
& = 0.
\end{align*}
With this, we can rewrite our desired \eqref{dgoal} as
\[(1-zq) \sum_{n=0}^\infty \sum_{m=n+1}^\infty \frac{z^m q^{mn+m+n}}{(zq;q)_n (q;q)_m} = \frac{z}{(1-q)(zq^2;q)_\infty} \sum_{n \ge 1} (-1)^{n-1} z^{n-1} q^{\binom{n+1}{2}}. \]
We multiply both sides by $(zq^2;q)_\infty$ and compare coefficients of $z^N$.  On the right-hand side, the coefficient is
\[ \frac{(-1)^{N-1} q^{\binom{N+1}{2}}}{1-q}.\]
On the left-hand side, we want the coefficient of $z^N$ in
\[ \sum_{n=0}^\infty \sum_{m=n+1}^\infty \frac{z^m q^{mn+m+n}}{(q;q)_m} (zq^{n+1};q)_\infty\]
which is 
\begin{align*}
\sum_{n=0}^\infty \sum_{m=n+1}^\infty \frac{q^{mn+m+n}}{(q;q)_m} & \cdot \frac{(-1)^{N-m} q^{\binom{N-m+1}{2} + n(N-m)}}{(q;q)_{N-m}} \\
& = \sum_{n=0}^\infty \sum_{m=n+1}^N \frac{1}{(q;q)_N} {N \brack m} (-1)^{N-m} q^{\binom{N-m+1}{2} + nN+m+n} \\
& = \frac{1}{(q;q)_N} \sum_{n=0}^\infty \sum_{m=0}^{N-n-1} {N \brack m} (-1)^m q^{\binom{m}{2} + N+nN+n} \\
& = \frac{1}{(q;q)_N} \sum_{n=0}^\infty q^{N+nN+n} (-1)^{N-n-1} q^{\binom{N-n}{2}} {N-1 \brack N-n-1} \\
& \qquad \text{[by \cite[Lemma 3]{an25}]} \\
& = \frac{1}{(q;q)_N} \sum_{n=0}^{N-1} q^{N+(N-1-n)N + N -n-1} (-1)^n q^{\binom{n+1}{2}} {N-1 \brack n} \\
& = \frac{q^{N^2 + N - 1}}{(q;q)_N} \sum_{n=0}^{N-1} (-1)^n q^{\binom{n}{2}-nN} {N-1 \brack n} \\
& = \frac{q^{N^2 + N - 1}}{(q;q)_N} (q^{-N};q)_{N-1} \\
& \qquad \text{[by the $q$-binomial theorem]} \\
& = \frac{q^{N^2 + N - 1}}{(q;q)_N} (-1)^{N-1} q^{-N(N-1) + \binom{N-1}{2}} (q^2;q)_{N-1} \\
& = \frac{(-1)^{N-1} q^{\binom{N+1}{2}}}{1-q}
\end{align*}
so that the coefficients of $z^N$ match, establishing \eqref{dgoal}.

Together, we have shown that the generating functions for $x_e(n,k)$ and $m_{<0}(n,k)$ are both $E(z,k)$ while the generating functions for $f(n,k)$ (for $k \ge 1$) and $m_{>0}(n,k)$ are both $zE(z,k)$, establishing Theorem \ref{big}.

\section{Combinatorial proof}

\subsection{Bijection between even mex and fixed points}

This bijection connecting (a) and (b) in Theorem \ref{big} follows Konan \cite{k}; we just need to show that it has the desired effect on the number of parts greater than one.  

More specifically, Konan details a generalized bijection that, in one case, connects $X_o(n)$ and the set of partitions of $n$ without a fixed point.  He outlines the changes required for a generalized bijection that, in one case, connects $X_e(n)$ and $F(n)$; that is the bijection we detail.

A few additional definitions are necessary for his bijections.  Let $G(n)$ be the partitions of $n$ without a fixed point, the complement of $F(n)$ in $P(n)$.  Using the terminology of Hopkins and Sellers \cite{hs24}, a partition $\lambda$ has a $j$-fixed point if $\lambda_i = i+j$ for some index $i$.  Let $G_1(n)$ be the partitions of $n$ that have no 1-fixed points.
For a partition $\lambda$, let $d_j(\lambda)$ be the greatest $i$ for which $\lambda_i \ge i+j$; the Durfee square size is $d_0(\lambda)$.  See Hopkins, Sellers, and Yee \cite{hsy} for applications of the related $j$-Durfee rectangle to a generalization of the mex statistic.

Write $\lambda \in X_e(n,k)$ with mex $2j+2$ for $j \ge 0$ as $(2j+1, 2j, \ldots, 2, 1) \cup \kappa$ where $\kappa$ is a partition of $n - j(2j+1)$ without any parts $2j+2$ and having $k - 2j$ parts greater than one (in other words, separate out the parts necessary to make the mex $2j+2$).  Konan describes an iterative map with two cases which repeat until the outcome is $(1) \cup \mu$ for some $\mu \in G_1(n-1)$.

(i) If $\kappa = (\kappa_1, \ldots, \kappa_t)$ has a $(2j+1)$-fixed point, i.e., 
$\kappa_i = i+2j+1$ for some index $i$, then fix $(2j+1, 2j, \ldots, 2, 1)$ and associate $\kappa$ with
\[ (\kappa_1 +1, \ldots, \kappa_{i-1}+1, 2j+2, \kappa_{i+1}, \ldots, \kappa_t). \]
Note that the number of parts greater than one is fixed.

(ii) If $\kappa$ does not have a $(2j+1)$-fixed point, set $d = d_{2j+1}(\kappa)$ and associate $(2j+1, 2j, \ldots, 2, 1) \cup \kappa$ with
\[ (2j-1, \ldots, 1) \cup (\kappa_1 -1, \ldots, \kappa_d - 1, d+2j+1, 2j, \kappa_{d+1}, \ldots, \kappa_t) \]
where, again, the overall number of parts greater than one is fixed.

As Konan explains, the proof that the map $X_o(n)$ to $G(n)$ is well-defined \cite[\S 4]{k} carries over to this map from $X_e(n)$ to $G_1(n-1)$.  

Table \ref{tab:ab} includes the bijection between $X_e(8)$ and $G_1(7)$.  As an example of a partition that requires more than one iteration of the maps, consider $(3,3,2,1) \in X_e(9)$ with mex 4.  Write this as $(3,2,1) \cup (3)$ and apply (ii) to get $(1) \cup (3,3,2)$.  Since the second part 3 is a 1-fixed point, apply (i) to get $(1) \cup(4,2,2)$ where $(4,2,2) \in G_1(8)$.

Konan also provides our final step for this first bijection, giving a bijection between $(1) \times G_1(n-1)$ and $F(n)$ \cite[Lemma 18]{k}.  Given $\lambda = (\lambda_1, \ldots, \lambda_t) \in G_1(n-1)$, set $d = d_1(\lambda)$ and associate $\lambda$ with 
\[(\lambda_1 - 1, \ldots, \lambda_d - 1, d + 1, \lambda_{d+1}, \ldots, \lambda_t) \in F(n)\]
since the $(d+1)$st part is $d+1$.  Note that the number of parts greater than one increases by one with the inclusion of $d+1$ except when $d_1(\lambda) = 0$ which occurs only for $\lambda = (1^{n-1})$ that is associated with $(1^n)$.  This explains the $k = 0$ exception for $F^*(n,k+1)$.

Table \ref{tab:ab} includes the bijection between $(1) \times G_1(7)$ and $F(8)$.  Continuing the previous example of $(3,3,2,1) \in X_e(9)$, the image $(1) \cup (4,2,2)$ with $d_1 = 1$ is sent to $(3,2,2,2) \in F(9)$.  For another example, $(4,4,1) \in X_e(9)$ is sent to $(1) \cup (4,4)$ with $d_1 = 2$ so that the final image is $(3,3,3) \in F(9)$.

We refer the reader to Konan \cite{k} for details of the reverse maps and verifications of these bijections.

\begin{table}[h]
\renewcommand{\arraystretch}{1.1}
\caption{The bijection between $X_e(8,k)$ and $F^*(8,k+1)$ through $(1) \times G_1(7,k)$, with applications of the maps (i) and (ii) included, where values of $k$ are separated by horizontal lines. Recall the special status of $1^8$ where superscripts denote repetition.} \label{tab:ab}
\begin{tabular}{c|c|c}
$X_e(8,k)$ & $(1) \times G_1(7,k)$ & $F^*(8,k+1)$ \\ \hline \hline
$1^8$ & $(1,1^7)$ & $1^8$ \\ \hline
$71$ & $(1,7)$ & $62$ \\
$611$ & $(1,61)$ & $521$\\
$51^3$ & $(1,511)$ & $4211$ \\
$41^4$ & $(1,41^3)$ & $321^3$ \\
$31^5$ & $(1,31^4)$ & $221^4$ \\ \hline
$431$ & $(1,43) \overset{\text{(i)}}{\longrightarrow} (1,52)$ & $422$ \\
$3311$ & $(1,331) \overset{\text{(i)}}{\longrightarrow} (1,421)$ & $3221$  \\
$321^3$ & $(321,11) \overset{\text{(ii)}}{\longrightarrow} (1,3211)$ & $2^311$ \\ \hline
$3221$ & $(321,2) \overset{\text{(ii)}}{\longrightarrow} (1,322)$ & $2^4$
\end{tabular}
\end{table}

\subsection {Bijection between fixed points and negative crank}
For the bijection connecting (b) and (c) in Theorem \ref{big}, we use the following two lemmas.

\begin{lemma} \label{ncf}
If $\lambda \in P(n)$ has a fixed point $i$ and negative crank, then $\omega(\lambda) \ge i$.
\end{lemma}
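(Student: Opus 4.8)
The plan is a short proof by contradiction that works directly from the definition of the crank. First I would record the elementary consequence of the hypothesis $\ck(\lambda) < 0$: a partition with $\omega(\lambda) = 0$ has crank $\lambda_1 \ge 1 > 0$, so negative crank forces $\omega(\lambda) > 0$, and hence $\ck(\lambda) = \mu(\lambda) - \omega(\lambda) < 0$, i.e. $\mu(\lambda) < \omega(\lambda)$.

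Next, write $w = \omega(\lambda)$ and suppose toward a contradiction that $w < i$. Since $\lambda$ has a fixed point at index $i$ we have $\lambda_i = i$, and because the parts are weakly decreasing, $\lambda_1 \ge \lambda_2 \ge \cdots \ge \lambda_i = i > w$. Thus the first $i$ parts of $\lambda$, counted with multiplicity, are each strictly greater than $w = \omega(\lambda)$, so by definition $\mu(\lambda) \ge i$. Combining this with $w < i$ gives $\mu(\lambda) \ge i > w = \omega(\lambda)$, contradicting $\mu(\lambda) < \omega(\lambda)$ from the first step. Hence $\omega(\lambda) \ge i$, as claimed.

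I expect no genuine obstacle here; the proof is essentially one paragraph. The only points needing a moment's care are the degenerate ones: if the assumed value $w$ were $0$, the contradiction already comes from the first step (negative crank forcing $\omega(\lambda) > 0$) rather than from the counting argument, and the case $i = 1$ is harmless since then $\lambda_1 = 1$ forces $\lambda = (1^n)$ with $\omega(\lambda) = n \ge 1$. The one subtlety to flag is the two-stage nature of the definition of $\mu(\lambda)$ — parts exceeding the already-determined value $\omega(\lambda)$ — and the fact that "parts greater than $w$" must be counted with multiplicity, so that $\lambda_1,\ldots,\lambda_i$ really do contribute $i$ to $\mu(\lambda)$.
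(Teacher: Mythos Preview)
Your argument is correct and follows essentially the same contradiction as the paper: assume $\omega(\lambda) < i$, use $\lambda_i = i$ to deduce $\mu(\lambda) \ge i$, and conclude the crank is nonnegative. You are in fact slightly more careful than the paper in explicitly separating out the $\omega(\lambda)=0$ case before invoking the formula $\ck(\lambda)=\mu(\lambda)-\omega(\lambda)$.
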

\begin{proof}
Suppose to the contrary that $\omega(\lambda) \le i-1$.  Since $\lambda_i = i$, we have $\mu(\lambda) \ge i$ and 
\[\ck(\lambda) = \mu(\lambda) - \omega(\lambda) \ge i - (i-1) = 1,\]
a contradiction since $\lambda$ has negative crank.  Therefore $\omega(\lambda) \ge i$.
\end{proof}

\begin{lemma} \label{ncg}
If $\lambda \in P(n)$ has Durfee square size $d$, negative crank, and does not have a fixed point, then $\omega(\lambda) \ge d+1$.
\end{lemma}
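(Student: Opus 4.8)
The plan is to argue by contradiction, in close parallel with the proof of Lemma~\ref{ncf} but using the Durfee square hypothesis in place of the fixed point. Suppose $\lambda \in P(n)$ has Durfee square size $d$, negative crank, and no fixed point, yet $\omega(\lambda) \le d$.

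First I would record two quick observations. Since $\ck(\lambda) < 0$, we cannot have $\omega(\lambda) = 0$, for in that case $\ck(\lambda) = \lambda_1 \ge 1$; hence $\omega(\lambda) \ge 1$ and $\ck(\lambda) = \mu(\lambda) - \omega(\lambda)$. Second, because the Durfee square has size $d$ we have $\lambda_d \ge d$, and since $\lambda$ has no fixed point, $\lambda_d \ne d$, so in fact $\lambda_d \ge d+1$; this is precisely the remark after the definition of fixed point in the introduction. In particular $\lambda$ has at least $d$ parts.

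Now I would combine the two: each of $\lambda_1, \ldots, \lambda_d$ is at least $\lambda_d \ge d+1 > d \ge \omega(\lambda)$, so all $d$ of these parts exceed $\omega(\lambda)$, giving $\mu(\lambda) \ge d$. Therefore
\[ \ck(\lambda) = \mu(\lambda) - \omega(\lambda) \ge d - \omega(\lambda) \ge d - d = 0, \]
contradicting $\ck(\lambda) < 0$. Hence $\omega(\lambda) \ge d+1$, as claimed.

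I do not expect a genuine obstacle here: the argument is a short counting estimate. The only two points requiring a little care are the degenerate case $\omega(\lambda) = 0$ (disposed of by the first observation) and the strict inequality $\lambda_d \ge d+1$, which comes from the absence of a fixed point and is already supplied in the introduction. This lemma, together with Lemma~\ref{ncf}, is presumably what lets the forthcoming bijection split partitions according to whether or not they have a fixed point while controlling $\omega$ and hence $\beta$.
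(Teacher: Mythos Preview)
Your proof is correct and follows essentially the same contradiction argument as the paper: assume $\omega(\lambda)\le d$, use the absence of a fixed point to get $\lambda_d>d$, deduce $\mu(\lambda)\ge d$, and conclude $\ck(\lambda)\ge 0$. You add a little extra care (explicitly ruling out $\omega(\lambda)=0$ and spelling out why each of $\lambda_1,\ldots,\lambda_d$ exceeds $\omega(\lambda)$), but the route is the same.
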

\begin{proof}
Suppose to the contrary that $\omega(\lambda) \le d$.  Since $\lambda$ does not have a fixed point, we know $\lambda_d > d$ so that $\mu(\lambda) \ge d$ and
\[\ck(\lambda) = \mu(\lambda) - \omega(\lambda) \ge d - d = 0,\]
a contradiction since $\lambda$ has negative crank.  Therefore $\omega(\lambda) \ge d+1$.
\end{proof}

We now present a bijection showing
$F(n,k+1) \cong M_{<0}(n,k)$ for $k \ge 1$.

Given $\lambda \in F(n,k+1)$, let $i$ be the fixed point.  Let $\kappa$ be the partition of $n$ where $\lambda_i$ is replaced by $i$ parts 1.  Now $\omega(\kappa) \ge i$ and $\mu(\kappa) < i - 1$ so that $\ck(\kappa) \le i-1 - i = -1$.  Also, among the parts of $\lambda$ greater than one, only $\lambda_i$ is changed, so that $\mu$ has $k$ parts greater than one.  Together, then, $\kappa \in M_{<0}(n,k)$.  Note that if $\lambda_{i+1} = i$, then $\mu_i = i$ and $\mu$ also has a fixed point.  If instead $\lambda_{i+1} < i$, then $\mu$ does not have a fixed point.

There are two cases for the reverse map.  Given $\kappa \in M_{<0}(n,k)$ with a fixed point, say $i$, we know from Lemma \ref{ncf} that $\omega(\kappa) \ge i$.  Let $\lambda$ be the partition of $n$ where $i$ parts 1 are replaced by a part $i$.  This partition $\lambda$ has $\lambda_i = \kappa_i = i$ and the number of parts greater than one has increased by one, so $\lambda \in F(n,k+1)$.  Note also that $\lambda_{i+1} = i$.

Given $\kappa \in M_{<0}(n,k)$ with no fixed point and Durfee square size $d$, we know from Lemma \ref{ncg} that $\omega(\kappa) \ge d+1$.  Let $\lambda$ be the partition of $n$ where $d+1$ parts 1 are replaced by a part $d+1$ which increases the number of parts greater than one by one.  Further, since $\kappa$ does not have a fixed point, $\lambda_d = \mu_d \ge d+1$ and $\lambda_{d+1} = d+1$ (the new part), so that $\lambda$ has a fixed point $d+1$.  Therefore $\lambda \in F(n,k+1)$.  Note that $\lambda_{d+2} = \kappa_{d+1} \le d$ since $d$ is the size of the Durfee square of $\kappa$, so that the fixed point of $\lambda$ does not occur as both $\lambda_{d+1}$ and $\lambda_{d+2}$.

Considering the notes about whether the partitions have a fixed point repeated in the next part, these maps are inverses and the bijection is established.

See Table \ref{tab:bc} for an example of the bijection.

\begin{table}[t]
\renewcommand{\arraystretch}{1.1}
\caption{The bijection between $F^*(8,k+1)$ and $M_{<0}(8,k)$ where values of $k$ are separated by horizontal lines; recall the special status of $1^8$.} \label{tab:bc}
\begin{tabular}{c|c}
$F^*(8,k+1)$ & $M_{<0}(8,k)$ \\ \hline \hline
$1^8$ & $1^8$ \\ \hline
$62$ & $611$ \\
$521$ & $51^3$ \\
$4211$ & $41^4$ \\
$321^3$ & $31^5$ \\
$221^4$ & $21^6$ \\ \hline
$422$ & $4211$ \\
$3221$ & $321^3$ \\
$2^311$ & $221^4$ \\ \hline
$2^4$ & $2^311$
\end{tabular}
\end{table}

\subsection{Bijection between negative and positive crank} \label{negposcrank}

Finally, we give a bijection connecting (c) and (d) in Theorem \ref{big}.

Write $\lambda \in M_{<0}(n,k)$ as $(\lambda_1, \ldots, \lambda_k, 1^w)$ where $\lambda_k > 1$ and $w = \omega(\lambda)$.  Since $\lambda \in M_{<0}(n,k)$ and $n \ge 2$, we know $w \ge 2$.  Writing $m = \mu(\lambda)$, we have $\lambda_m > w \ge \lambda_{m+1}$ where $m = 0$ is possible, i.e., it could be that $w \ge \lambda_1$.  Associate $\lambda$ with
\[ \rho = (\lambda_1 - 1, \ldots, \lambda_m - 1, w, \lambda_{m+1}, \ldots, \lambda_k, 1^m)\]
which is clearly a partition of $n$.  Since $\lambda_m > w \ge 2$ (if $m \ne 0$), we have $\beta(\rho) = \beta(\lambda) + 1 = k+1$.  We claim that $\ck(\rho) > 0$.  Because $\ck(\lambda) < 0$, we have $w > m$.  Since $\rho_{m+1} = w > m$, we know $\mu(\rho) \ge m+1$.  Therefore
\[\ck(\rho) = \mu(\rho) - \omega(\rho) \ge m+1 - m = 1\]
and we conclude $\rho \in M_{>0}(n,k+1)$.

For the reverse map, write $\rho \in M_{>0}(n,k+1)$ as $(\rho_1, \ldots, \rho_{k+1}, 1^v)$ where $\rho_{k+1} > 1$ and $v = \omega(\rho)$.  Let $\ell = \rho_{v+1}$.  Since $\ck(\rho)>0$, we know $\ell \ge v+1$: If $\ell \le v$, then $\mu(\rho) - \omega(\rho) \le v - v = 0$. 
Associate $\rho$ with
\[ \lambda = (\rho_1 + 1, \ldots, \rho_v+1,\rho_{v+2}, \ldots, \rho_{k+1}, 1^\ell)\] 
which is clearly a partition of $n$.  Since $\ell = \rho_{v+1}$ has been replaced by $1^\ell$, we have $\beta(\lambda) = \beta(\rho) -1 = k$.  We claim that $\ck(\lambda) < 0$.  Since $\ell = \rho_{v+1}$, we have $\mu(\lambda) = v$ so that
\[\ck(\lambda) = \mu(\lambda) - \omega(\lambda) \le v - (v+1) = -1\]
and we conclude $\lambda \in M_{<0}(n,k)$.

It is straightforward to confirm that the maps are inverses, establishing the bijection.

See Table \ref{tab:cd} for an example of the bijection.

\begin{table}[t]
\renewcommand{\arraystretch}{1.1}
\caption{The bijection between $M_{<0}(8,k)$ and $M_{>0}(8,k+1)$ where values of $k$ are separated by horizontal lines.} \label{tab:cd}
\begin{tabular}{c|c}
$M_{<0}(8,k)$ & $M_{>0}(8,k+1)$ \\ \hline \hline
$1^8$ & $8$ \\ \hline
$611$ & $521$ \\
$51^3$ & $431$ \\
$41^4$ & $44$ \\
$31^5$ & $53$ \\
$21^6$ & $62$ \\ \hline
$4211$ & $3221$ \\
$321^3$ & $332$ \\
$221^4$ & $422$ \\ \hline
$2^311$ & $2^4$
\end{tabular}
\end{table}

Note that this bijection, considered over all values $k$, shows $m_{<0}(n) = m_{>0}(n)$.  However, unlike the bijection of Berkovich and Garvan \cite{bg}, it does not always send $\lambda \in M_{\ne 0}(n)$ to a partition whose crank is $-\ck(\lambda)$ (e.g., from Table \ref{tab:cd}, $\ck((5,1,1,1)) = -2$ and $\ck((4,3,1)) = 1$).

Actually, it is impossible to have a map that simultaneously has the desired effect on $\beta(\lambda)$ and negates $\ck(\lambda)$: The complete lists of partitions of 7 with cranks $\pm1$ are
\begin{gather*}
 \ck((5,1,1)) = \ck((3,2,1,1)) = -1, \beta((5,1,1)) = 1,  \beta((3,2,1,1)) = 2; \\
 \ck((4,2,1)) =  \ck((3,3,1)) = 1, \beta((4,2,1)) = \beta((3,3,1)) = 2.
 \end{gather*}

Table \ref{tab:all} shows the complete correspondences given by the bijections between the four sets for $n=9$.

\begin{table}[h]
\renewcommand{\arraystretch}{1.1}
\caption{The composed bijections between $x_e(9,k)$, $F^*(9,k+1)$, $M_{<0}(9,k)$, and $M_{>0}(9,k+1)$ where values of $k$ are separated by horizontal lines.} \label{tab:all}
\begin{tabular}{c|c|c|c}
$X_e(9,k)$ & $F^*(9,k+1)$ & $M_{<0}(9,k)$ & $M_{>0}(9,k+1)$ \\ \hline \hline
$1^9$ & $1^9$ & $1^9$ & $9$ \\ \hline
$81$ & $72$ & $711$ & $621$ \\
$711$ & $621$ & $61^3$ & $531$ \\
$61^3$ & $5211$ & $51^4$ & $441$ \\
$51^4$ & $421^3$ & $41^5$ & $54$ \\
$41^5$ & $321^4$ & $31^6$ &  $63$ \\
$31^6$ & $221^5$ & $21^7$ & $72$ \\ \hline
$531$ & $522$ & $5211$ & $4221$ \\
$441$ & $3^3$ & $331^3$ & $3^3$ \\
$4311$ & $4221$ & $421^3$ & $3321$ \\ 
$331^3$ & $32211$ & $321^4$ & $432$ \\
$321^4$ & $2^31^3$ & $221^5$ & $522$ \\ \hline
$3321$ & $32^3$ & $32211$ & $2^41$ \\
$3221^4$ & $2^41$ & $2^31^3$ & $32^3$
\end{tabular}
\end{table}

\section{Remaining open problem}
Unfortunately, our results do not seem to contribute to a combinatorial proof of $x_o(n,k) = m_{\ge 0}(n,k)$ requested by Andrews and Newman.  There are two insurmountable impediments to refining Konan's bijections 
\[X_o(n) \cong G(n) \cong M_{\le 0}(n) \cong M_{\ge 0}(n)\]
to account for the changes in the number of parts greater than one.

First, it is impossible to have a bijection between $X_o(n)$ and $G(n)$ with the desired effect on the number of parts greater than one since, for example, 
\begin{gather*}
X_o(5) = \{ (5), (3,2), (2,2,1), (2,1,1,1) \}  \text{ has $\beta$ values $1, 2, 2, 1$} \\
\text{and } G(5) = \{ (5), (4,1), (3,1,1), (2,1,1,1) \}  \text{ has $\beta$ values all $1$.}
\end{gather*}

Second, it is impossible to have a bijection between $M_{\le 0}(n,k)$ and $M_{\ge 0}(n,k+1)$ since, for example, 
\begin{gather*}
M_{\le 0}(5) = \{ (4,1), (3,1,1), (2,1,1,1), (1,1,1,1,1) \}  \text{ has $\beta$ values $1,1,1,0$} \\
\text{and } M_{\ge 0}(5) = \{ (5), (4,1), (3,2), (2,2,1) \}  \text{ has $\beta$ values $1,1,2,2$.}
\end{gather*}
In contrast to the bijection of \S \ref{negposcrank}, the issue here is the inclusion of the crank zero partition $(4,1)$.  Also, considering the case $n=26$ shows that a bijection between $M_{\le 0}(n,k)$ and $M_{\ge 0}(n,k+1)$ is not always possible even when the number of crank zero partitions is even.

We note without proof that Konan's bijection $G(n) \cong M_{\le 0}(n)$ does preserve the number of parts greater than one, i.e., is a bijection $G(n,k) \cong M_{\le 0}(n,k)$.

Satisfying Andrews and Newman's request for a bijection $X_o(n,k) \cong M_{\ge 0}(n,k)$ would seem to require a direct connection between the two sets or involve equinumerous sets of partitions other than $G(n)$ and $M_{\le 0}(n)$.


\begin{thebibliography}{99}

\bibitem{a}
G. E. Andrews.  \emph{$q$-series:\ Their Development and Application in Analysis, Number Theory, Combinatorics, Physics, and Computer Algebra}.  CBMS Regional Conference Series in Mathematics, vol. 66, American Mathematical Society, 1986.

\bibitem{ae}
G. E. Andrews, K. Eriksson. \emph{Integer Partitions}.  Cambridge University Press, 2004.

\bibitem{ag} 
G. E. Andrews, F. G. Garvan. Dyson's crank of a partition.  \emph{Bull. Amer. Math. Soc.} 18 (1988) 167--171.  

\bibitem{an19}
G. E. Andrews, D. Newman.  Partitions and the minimal excludant. \emph{Ann. Comb.} 23 (2019) 249--254.

\bibitem{an20}
G. E. Andrews, D. Newman. The minimal excludant in integer partitions. \emph{J. Integer Seq.} 23 (2020) 20.2.3.

\bibitem{an25}
G. E. Andrews, M. Newman. A refinement of the crank--mex theorem. Preprint (2025) arXiv:2508.17491.

\bibitem{bg}
A. Berkovich, F. G. Garvan, Some observations on Dyson's new symmetries of partitions, \textit{J. Combin. Theory Ser. A} 100 (2002) 61--93.

\bibitem{bk}
A. Blecher, A. Knopfmacher,  Fixed points and matching points in partitions, \emph{Ramanujan J.} 58 (2022) 23--41.

\bibitem{g}
F. Garvan, New combinatorial interpretations of Ramanujan's partition congruences mod 5, 7, and 11, \emph{Trans. Amer. Math. Soc.} 305 (1988) 47--77.

\bibitem{hs20}
B. Hopkins, J. A. Sellers. Turning the partition crank. \emph{Amer. Math. Monthly} 127 (2020) 654--657.

\bibitem{hs24}
B. Hopkins, J. A. Sellers,  On Blecher and Knopfmacher's fixed points for integer partitions, \emph{Discrete Math.} 347 (2024) 113839.

\bibitem{hsy}
B. Hopkins, J. A. Sellers, A. J, Yee, Combinatorial perspectives on the crank and mex partition statistics, \emph{Electron. J. Combin.} 29 (2022) P2.9.

\bibitem{k}
I. Konan.  A bijective proof and generalization of the non-negative crank--odd mex identity.  \emph{Electron. J. Combin.} 30 (2023) P1.41.

\end{thebibliography}
\end{document}